\newtheorem{theorem}{Theorem}
\newtheorem{lemma}[theorem]{Lemma}
\newtheorem{corollary}[theorem]{Corollary}
\newcommand{\R}[1]{{\rm I\! R}^{#1}}
\begin{document}

\title{Testing Gap $k$-planarity is NP-complete}
\keywords{Graph Drawing, Local Crossing Number, Crossing Number, Gap k-Planarity}
\author{John C. Urschel}
\address{Department of Mathematics, Massachusetts Institute of Technology, Cambridge, MA, USA.}
 \email{urschel@mit.edu}
 \author{Jake Wellens}
 \address{Department of Mathematics, Massachusetts Institute of Technology, Cambridge, MA, USA.}
 \email{jwellens@mit.edu}

\everymath{\displaystyle}


\begin{abstract}
For all $k \geq 1$, we show that deciding whether a graph is $k$-planar is NP-complete, extending the well-known fact that deciding 1-planarity is NP-complete. Furthermore, we show that the gap version of this decision problem is NP-complete. In particular, given a graph with local crossing number either at most $k\ge 1$ or at least $2k$, we show that it is NP-complete to decide whether the local crossing number is at most $k$ or at least $2k$. This algorithmic lower bound proves the non-existence of a $(2-\epsilon)$-approximation algorithm for any fixed $k \ge 1$. In addition, we analyze the sometimes competing relationship between the local crossing number (maximum number of crossings per edge) and crossing number (total number of crossings) of a drawing. We present results regarding the non-existence of drawings that simultaneously approximately minimize both the local crossing number and crossing number of a graph.
\end{abstract}

\maketitle

\section{Introduction}

 Graph drawing is a central area of research in graph theory, with applications to network analysis, bioinformatics, circuit schematics, and software engineering, among other areas \cite{di1994algorithms}. For an introduction to the field, see  \cite{battista1998graph,kaufmann2003drawing}. However, in many applications, the corresponding graph is non-planar (e.g., small world networks). Even though the majority of these graphs cannot be drawn in the plane without edge crossings, minimizing the number of crossings that occur (either per edge or in total) in a planar embedding is an important task, for both practical and aesthetic reasons in application \cite{purchase2000effective,purchase2002empirical,ware2002cognitive}. This has led to a great deal of recent interest in graph drawings that minimize the number of crossings in some sense, and has made graph crossings one of the major areas of graph drawing research (see \cite{kobourov2017annotated} for a summary of the extensive literature regarding 1-planar graphs, \cite{didimo2019survey} for the study of drawings in terms of forbidden crossing configurations, and \cite{schaefer2018crossing} for a treatment of the overall field in general).

There are a number of competing measures of what exactly constitutes a drawing with few crossings. The total number of pairwise edge crossings of a drawing $D$ is called the crossing number of $D$, denoted by $cr(D)$. The minimum crossing number over all drawings $D$ of a graph $G$ is called the crossing number of $G$, denoted by $cr(G)$. While this is an important parameter, in some cases the maximum number of crossings per edge matters more than the total number of edge crossings. A drawing $D$ is $k$-planar if each edge participates in at most $k$ crossings, and a graph is $k$-planar if there exists a $k$-planar drawing of it. The maximum number of crossings per edge of a drawing $D$ is called the local crossing number of $D$, denoted by $lcr(D)$. The minimum local crossing number over all drawings $D$ of a graph $G$ is called the local crossing number of $G$, denoted by $lcr(G)$.

Both the crossing number and local crossing number are active areas of research with many open problems, such as the value of these quantities for specific graphs and the existence of approximation algorithms (for instance, see \cite{asplund2019k,pach2018note}). For a thorough treatment of both of these subjects, see \cite{schaefer2018crossing}. Despite the importance of the crossing number and local crossing number in producing a quality drawing of a graph, the majority of computational results regarding these two quantities have been algorithmic lower bounds rather than constructions. For instance, deciding if a graph has $cr(G)\le k$ for a fixed $k$ was shown to be NP-complete in \cite{garey1983crossing}. In \cite{grigoriev2007algorithms} and later independently in \cite{korzhik2013minimal}, it was shown that deciding whether a graph is $1$-planar is NP-complete (a third proof was later given in \cite{schaefer2018crossing}). Many generalizations of $1$-planarity, which we will not discuss in detail, including deciding $1$-gap-planarity \cite{bae2018gap} and deciding fan-planarity with fixed rotation system \cite{bekos2014recognition}, have also been shown to be NP-complete. Despite the similarity in terminology, the set of $k$-gap-planar graphs (graphs that can be drawn so that each crossing can be assigned to one of the two crossed edges in a way such that each edge is  assigned at most $k$ crossings) is unrelated to the gap $k$-planarity decision problem (given a graph $G$ with local crossing number either $\le k$ or $\ge ck$, for some fixed $c>1$, decide whether $lcr(G)\le k$) defined below. We note that there are a number of linear time planarity testing algorithms, most notably the edge addition method (see \cite{boyer2004cutting,de2006tremaux}).  The interplay between the local crossing number and crossing number of a drawing is an understudied area. Most notably, in \cite{chimani2019crossing}, the authors show the existence of $1$-planar (resp. quasi-planar, fan planar) order $n$ graphs $G$ for which $cr(G) = O(1)$ and every $1$-planar (resp. quasi-planar, fan planar) drawing $D$ of $G$ satisfies $cr(D) = \Omega(n)$. This result quantifies the ability to approximately minimize the crossing number over all drawings that minimize local crossing number. However, there is a lack of results regarding the existence or non-existence of drawings $D$ of a graph $G$ that approximately minimize both crossing number and local crossing number in some more general sense (for example, minimizing some function of $cr(D)/cr(G)$ and $lcr(D)/lcr(G)$).

\subsection{Our Contributions}
In this paper, we show that testing $k$-planarity is NP-complete for all $k \geq 1$. Furthermore, we show that the gap decision problem\\

\indent \indent  \noindent{\bf GAP $k$-PLANARITY} \\
\indent \indent  {\bf Input:} A graph $G = (V, E)$ with $lcr(G) \le k$ or $lcr(G) \ge 2k$. \\
\indent \indent  {\bf Output:} TRUE if $lcr(G) \le k$; FALSE otherwise. \\

\noindent is NP-complete. Our proof proceeds in two simple steps: first we prove that the multigraph variant of gap $k$-planarity is hard, and then we reduce to the graph case via a technique called edge subdivision. In \cite{korzhik2013minimal} the authors briefly sketch a possible way to modify their $1$-planarity proof to give hardness of $k$-planarity testing in multigraphs, but this approach is quite involved, and filling in the details appears somewhat difficult. Here, we provide a proof of an even stronger result, namely, hardness of the gap version of multigraph k-planarity testing, \\

\indent \indent  \noindent{\bf MULTIGRAPH GAP $k$-PLANARITY} \\
\indent \indent {\bf Input:} A multigraph $G = (V, E, \omega)$ with $lcr(G) \le k$ or $lcr(G) \ge 2k$. \\
\indent \indent  {\bf Output:} TRUE if $lcr(G) \le k$; FALSE otherwise. \\

 \noindent using a reduction from $3$-partition. Our gadget is inspired by a simplification of the technique used in \cite{grigoriev2007algorithms} to prove hardness of testing $1$-planarity. By proving our main result, we also provide an alternate proof of the hardness of $1$-planarity, one that does not rely on the machinery of $K_6$ that was needed in \cite{grigoriev2007algorithms}. It is fairly straightforward to verify that all of the decision problems considered in this paper are in NP, and therefore, it suffices to show hardness.
 
 In addition, we analyze the sometimes opposing optimization problems of minimizing the crossing number versus minimizing the local crossing number. We quantify the inability to simultaneously approximately minimize the crossing number and local crossing number of a drawing by constructing an infinite class of graphs $G = (V,E)$ for which $\frac{cr(D) lcr(D) }{cr(G) lcr(G)} \gtrsim |V|^{1/2}$ for all drawings $D$ of $G$. As a corollary of the crossing lemma, we also show that, for every graph $G$, there exists a drawing $D$ such that $\frac{cr(D) lcr(D) }{cr(G) lcr(G)} \lesssim |V|$.

\section{Definitions}
Here we provide formal definitions for some of the previously introduced terminology. A \emph{graph} $G=(V,E)$ is a pair $(V,E)$, where $V$ is a finite set and $E \subset \{e \subset V \, | \, |e| =2 \}$. A \emph{multigraph} $G = (V,E,\omega)$ is a graph $(V,E)$, paired with a function $\omega:E \rightarrow \mathbb{N}^+$ detailing the number of copies $\omega(e)$ of a given edge $e \in E$ which appear in $G$. A \emph{planar drawing} of a graph $G = (V,E)$ is an injective mapping from the vertex set $V$ to $\R{2}$ paired with a smooth plane curve $\phi_e:[0,1] \rightarrow \R{2}$ between $u$ and $v$ for all $e=(u,v) \in E$, with the condition that a vertex only intersects a curve at its endpoint. For the sake of brevity, we will refer to a planar drawing simply as a \emph{drawing} (or embedding). A \emph{crossing} between two edges $e$ and $e'$, $e \ne e'$, is a pair of values $t, t' \in (0,1)$ such that $\phi_e(t) = \phi_{e'}(t')$. A crossing between an edge $e$ and itself is a pair of values $t, t' \in (0,1)$, $t \ne t'$, such that $\phi_e(t) = \phi_e(t')$, and is referred to as a \emph{self-crossing}. A drawing is said to be \emph{normal} if each pair of edges has at most a finite number of crossings, no three edges cross at a single point, and there are no ``touching points" (a crossing which can be removed by a redrawing of the curves in any neighborhood of that point). A drawing is said to be \emph{good} if edges do not have self-crossings ($\phi(e)$ are simple curves), adjacent edges do not cross, and every pair of edges has at most one crossing. In this paper, we will assume all drawings are normal, as touching points can be removed and three edges crossing at a single point can always be perturbed so that the crossing number and local crossing number do not increase. However, we will not assume that drawings are good. While all drawings that minimize crossing number are known to be good, and, for all graphs $G$ with $lcr(G)\le 3$, there exists a good drawing which minimizes local crossing number, for $lcr(G)\ge 4$ there are examples for which all drawings that minimize local crossing number have crossings between adjacent edges \cite{pach2006improving}. We will assume that there are no self-crossings, as, similar to the touching points, these can be removed without increasing either the crossing number or local crossing number. All of the previous definitions for a graph are equally applicable for multigraphs, with the convention that two copies of an edge $e$ are considered two distinct edges, crossings between copies of an edge are allowed, and local crossing number counts the maximum number of crossings in any given copy of an edge. For a detailed discussion regarding the consequences of some of the above restrictions on drawings, as well as a number of related open problems, we refer the reader to \cite{schaefer2018crossing}.

\section{Testing Gap k-planarity is NP-complete}

In this section, we use multigraphs with edges of varying multiplicity to provide a simple proof that multigraph gap $k$-planarity is NP-hard, and then use a technique called edge subdivision to show that gap $k$-planarity is NP-hard, the main result of the paper.

Our proof is via a reduction from the $3$-partition problem. The $3$-partition decision problem asks whether a multiset $A$ of $3m$ integers can be partitioned into $m$ multisets $A_1,...,A_m$ such that the sum of the elements in each multiset is the same. Without loss of generality, one may assume that every integer is positive and strictly between $1/4$ and $1/2$ of the desired sum $B$. In addition, we assume that $B \ge 100$ and $m \ge 4$. One can easily verify that $3$-partition remains NP-hard under this mild restriction (for details, see \cite{garey2002computers}).

Our reduction converts an instance 
$$A = \{a_1,...,a_{3m}\}, \quad  \sum_{i=1}^{3m} a_i= Bm, \quad \frac{B}{4} < a_i < \frac{B}{2}, \; i = 1,...,3m,$$ 
of $3$-partition into a multigraph $G_A = (V,E, \omega)$ defined as follows: 
$$ V = \big\{t, c\big\} \bigcup \big\{ t_1,...,t_{3m}\big\} \bigcup \big\{c_1,...,c_{Bm}\big\} \bigcup \big\{s_1,...,s_{3m}\big\} \bigcup \big\{\ell_i^{j}\big\}_{i = 0,1,...,a_j}^{j=1,...,3m},$$
\begin{align*}
E &=  \big\{(t,t_{3i}) (t_{3i},c_{Bi}) (c_{Bi},c) \big\}_{i=1}^m \bigcup \big\{(t_i,t_{i+1})\big\}_{i=1}^{3m} \bigcup \big\{(c_i,c_{i+1})\big\}_{i=1}^{Bm} \\ &\qquad \bigcup \big\{(t,\ell_0^i),(\ell_0^i,s_i)\big\}_{i=1}^{3m} \bigcup \big\{(s_j,\ell_i^{j}), (\ell_i^{j},c)\big\}_{i = 1,...,a_j}^{j=1,...,3m}, 
\end{align*}
$$\omega(e) = \left\{  \begin{array}{l l} k & e \in   \big\{(t,\ell_0^i),(\ell_0^i,s_i)\big\}_{i=1}^{3m} \bigcup \big\{(s_j,\ell_i^{j}), (\ell_i^{j},c)\big\}_{i = 1,...,a_j}^{j=1,...,3m} \\
2k & e \in \big\{(t_i,t_{i+1})\big\}_{i=1}^{3m} \bigcup \big\{(c_i,c_{i+1})\big\}_{i=1}^{Bm} \\
5Bk & e \in  \big\{(t_,t_{3i}),(t_{3i},c_{Bi}) (c_{Bi},c) \big\}_{i=1}^m \end{array}, \right. $$
where $t_{3m+1}:=t_1$ and $c_{Bm+1}:=c_1$. We provide a visual example of this graph in Figure 1.

\begin{figure}\label{fig:ex}
	\center
	\includegraphics[width=4.5in]{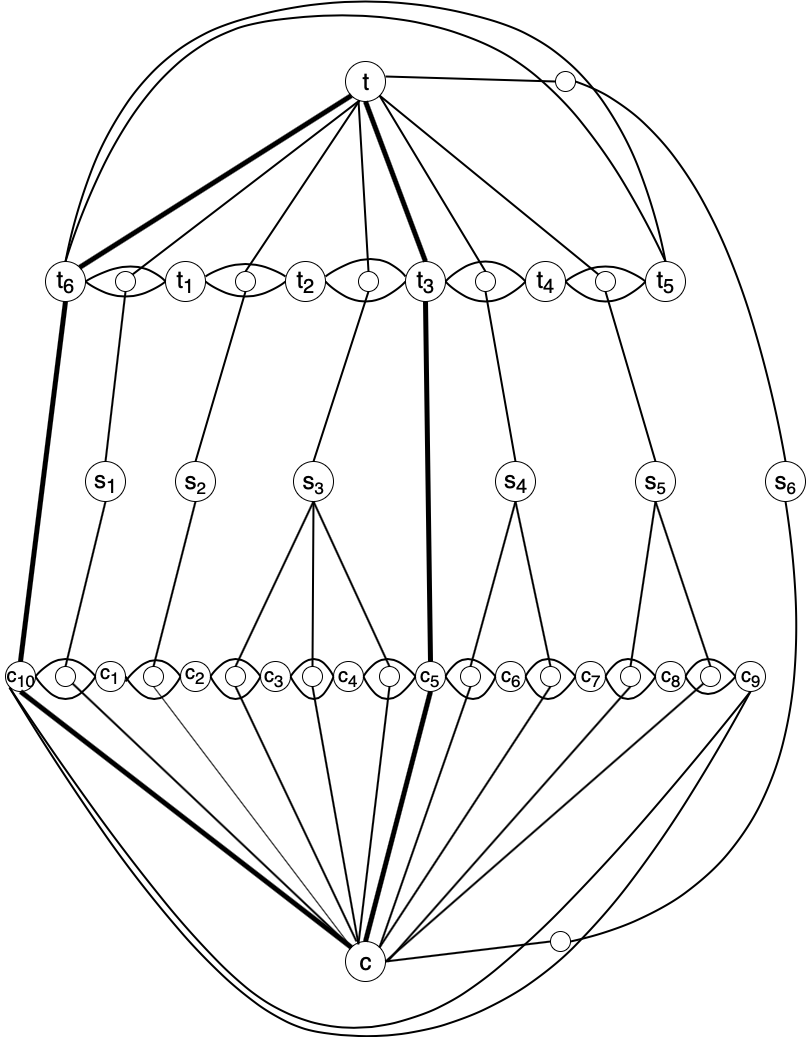} 
	\caption{A visual example of an embedding of $G_A$, where $A =\{a_1,a_2,a_3,a_4,a_5,a_6\}$, $a_1 = a_2 = a_6 = 1$, $a_4 = a_5 = 2$, $a_3 = 3$. The edges represent $k$ multi-edges and the bolded edges represent $5Bk$ multi-edges. The paths $P_1 = t \, t_3 \, c_5 \,c$ and $P_2 = t \, t_6 \, c_{10} \, c$ partition the sphere into two regions. The partition of the vertices $s_1,s_2,s_3,s_4,s_5,s_6$ into $s_1,s_2,s_3$ and $s_4,s_5,s_6$ by the paths $P_1$ and $P_2$ corresponds to the $3$-partition into multisets $\{a_1,a_2,a_3\}$ and $\{a_4,a_5,a_6\}$, each with sum equal to five.}
\end{figure}

While the formal definition is somewhat involved, the concept is rather straightforward. A graph drawing can be equivalently thought of as an embedding on the unit sphere $S^2 = \{ x \in \R{3} \, | \, \| x \| = 1\}$ (via stereographic projection). For simplicity, we will use this representation throughout the paper. Qualitatively, a drawing of $G_A$ can be thought of as a partition of the sphere, where $t$ is at the north pole $\vec{n} = (0,0,1)$, $c$ is at the south pole $\vec{s} = (0,0,-1)$, and the $m$ multi-edge paths of the form $P_i = t \, t_{3i} \, c_{Bi} \, c$, $i = 1,...,m$, partition $S^2$ into $m$ regions. Each star induced by the vertices $s_j, \ell^{j}_1,...,\ell^{j}_{a_j}$ corresponds to a number $a_j$ in $A$, $j=1,...,3m$. The multiset $A_i$ consists of the $a_j$ corresponding to the stars contained in the $i^{th}$ region of $S^2$. The cycle $C_t = t_1 \, t_2 \, ... \, t_{3m} $ paired with edges $(t,t_{3i})$ guarantee that each region has exactly three stars, and the cycle $C_c = c_1 \, c_2 \, ... \,c_{Bm}$ paired with edges $(c,c_{Bi})$ guarantee that in each region the number of leaves corresponding to the three stars (and therefore the sum of the corresponding $a_j$s) is exactly $B$. 

We are now prepared to provide a formal proof of our desired result.

\begin{theorem}\label{multi}
	Multigraph gap $k$-planarity ($k \ge 1$) is NP-complete.
\end{theorem}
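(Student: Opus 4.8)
The plan is to prove NP-hardness by a polynomial reduction from $3$-partition, showing that $A$ admits a valid $3$-partition if and only if $lcr(G_A)\le k$, and moreover that whenever no valid $3$-partition exists one has $lcr(G_A)\ge 2k$; together with the (routine) observation that the problem lies in NP, this gives NP-completeness of the gap problem. First I would check that the reduction is polynomial: since $3$-partition is strongly NP-hard we may take $B$ to be polynomially bounded, so $G_A$ has $O(Bm)$ vertices and its edge multiplicities $k,2k,5Bk$ are polynomial in the input for each fixed $k$; hence $A\mapsto G_A$ is computable in polynomial time.

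For the forward implication I would produce an explicit drawing. Given a partition $A=A_1\cup\cdots\cup A_m$ into triples of sum $B$, draw $t,c$ as the two poles of $S^2$, route the $m$ partition paths $P_i=t\,t_{3i}\,c_{Bi}\,c$ as $m$ disjoint meridians cutting the sphere into $m$ regions, and place the three stars corresponding to $A_i$ inside the $i$-th region, using the three arcs of $C_t$ and the $B$ arcs of $C_c$ available in that region to route the three top-connections to $t$ and the $B=\sum_{j\in A_i}a_j$ bottom-connections to $c$. The key bookkeeping step is to verify that the multi-edges can be bundled so that no single copy is crossed more than $k$ times: the heavy bundles (multiplicity $2k$ and $5Bk$) are left uncrossed, and only the light ($k$-fold) star bundles cross one another, each copy at most $k$ times, giving $lcr(G_A)\le k$.

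The reverse implication is where the real work lies, and is the step I expect to be the main obstacle. Starting from any drawing $D$ with $lcr(D)<2k$, I would first establish a rigidity lemma: because every copy sustains at most $2k-1$ crossings, no light edge-copy can traverse the separating cut of a heavy bundle, since doing so forces it to cross all $\ge 2k$ parallel copies of some heavy edge and hence to incur $\ge 2k$ crossings; similarly no two heavy bundles can cross as bundles. Consequently the $m$ partition paths must be realized as pairwise non-crossing simple curves from $t$ to $c$ partitioning $S^2$ into $m$ regions, and each star---being connected and unable to cross a partition path---is confined to a single region. The second, more delicate part is the counting: I would argue that the cycle $C_t$ together with the spokes $(t,t_{3i})$ admits exactly three ``gates'' per region through which a top-connection may reach $t$ without crossing a $2k$-bundle, and symmetrically that $C_c$ with $(c_{Bi},c)$ admits exactly $B$ such gates near $c$; this forces precisely three stars, with leaf-total exactly $B$, in each region. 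The resulting assignment of stars to regions is then a valid $3$-partition.

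Finally I would assemble the gap statement. The rigidity argument shows that any drawing either conforms to the skeletal structure above, which is possible exactly when $A$ is $3$-partitionable and then gives $lcr(G_A)\le k$, or violates it somewhere, and every violation forces some edge-copy through a bundle of multiplicity $\ge 2k$ and hence $lcr(G_A)\ge 2k$. Thus $G_A$ always satisfies the promise, and distinguishing $lcr(G_A)\le k$ from $lcr(G_A)\ge 2k$ decides $3$-partition. The crux throughout is the rigidity and gate analysis: making fully rigorous that the heavy bundles act as impermeable walls for the light edges and that exactly three (resp.\ $B$) gates are available per region is the technically demanding part, since one must reason topologically about how multi-edges of a given multiplicity partition the sphere and bound the crossings any intruding curve must absorb.
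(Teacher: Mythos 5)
Your high-level plan is the same as the paper's (reduce from $3$-partition via $G_A$, prove both implications, observe the promise always holds), but two of your central claims are false as stated, and they are not cosmetic. In the forward direction, the bookkeeping you call the ``key step'' cannot be carried out: you claim a $k$-planar drawing in which the multiplicity-$2k$ and multiplicity-$5Bk$ bundles are uncrossed and only the light $k$-fold star bundles cross one another. No such drawing exists. Each star center $s_j$ must be joined by light paths to both $t$ and $c$, and the $2k$ edge-disjoint copies of the cycle $C_t$ (resp.\ $C_c$) form $2k$ closed curves each of which separates $t$ (resp.\ $c$) from wherever $s_j$ sits; by the Jordan curve theorem some light two-edge path must therefore cross every copy of at least one of these multicycles, i.e.\ the $2k$-bundles are necessarily crossed. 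The paper's drawing is $k$-planar precisely because of how these forced crossings are split: the degree-two vertices $\ell_0^j$ (and the leaves $\ell_i^j$) are placed in the middle of a $2k$-bundle, so the unavoidable $2k$ crossings distribute as $k+k$ over the two incident light edges, and each cycle-edge copy absorbs exactly $k$ crossings. Your described drawing, if pursued, would fail verification.

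The same misunderstanding undermines your reverse direction. Your ``rigidity lemma'' --- that a light edge-copy traversing a heavy bundle must incur $\ge 2k$ crossings --- is false: a light two-edge path can traverse a $2k$-bundle at cost $k$ per edge (this is exactly what happens in every YES-instance drawing), so the dichotomy ``conforms to the skeleton, hence $\le k$; violates it, hence $\ge 2k$'' does not hold. Moreover, the phrase ``parallel copies'' conceals the real difficulty: in an arbitrary drawing the copies of a multi-edge may cross and interlace one another, so no bundle is a priori a wall, and the regions it bounds are not well defined. The bulk of the paper's proof of the NO-direction consists of exactly the work you defer: counting arguments with edge-disjoint cycles that allow one to discard a bounded number ($O(k)$ per path) of interlacing copies near $t$, $c$, $t_{3i}$, $c_{Bi}$; showing the surviving path copies have the same cyclic order at both poles and that this order must be the natural one $P_1,\dots,P_m$ (using the cycle $C_t$); defining the $m$ regions via middle copies; and only then running the pigeonhole arguments (at most three stars per region, at most $B$ leaf-paths through the $B$ multi-edges of $C_c$ in a region). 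Since your sketch both asserts a false rigidity statement and explicitly leaves this combinatorial core unproven, it does not yet constitute a proof of the theorem.
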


We will prove two statements. First, we will show that if $A$ has a $3$-partition, then $G_A$ is $k$-planar. Second, we will show that if $A$ does not have a $3$-partition, then $lcr(G_A)\ge 2k$. These two results together complete the proof of Theorem \ref{multi}.

\begin{lemma} \label{lemma1}
	If $A$ has a $3$-partition, then $G_A$ is $k$-planar.
	\end{lemma}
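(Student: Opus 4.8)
The plan is to turn a given $3$-partition $A_1,\dots,A_m$ of $A$ into an explicit $k$-planar drawing of $G_A$ on the sphere $S^2$, following the qualitative picture described above. First I would fix $t$ at the north pole and $c$ at the south pole, and draw the $m$ meridian paths $P_i = t\,t_{3i}\,c_{Bi}\,c$ as $m$ arcs from $t$ to $c$, dividing $S^2$ into $m$ regions $R_1,\dots,R_m$. Each of the $5Bk$ copies of a meridian edge is drawn as a tight parallel bundle, so the meridians carry no crossings at all. I would then place the three stars indexed by $A_i$ into region $R_i$, and route the two cycles $C_t$ and $C_c$ so that they pass from one region to the next only through the shared vertices $t_{3i}$ and $c_{Bi}$; in particular $C_t$ and $C_c$ never cross a meridian. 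Inside $R_i$ the cycle $C_t$ contributes the three-edge arc $t_{3i}\,t_{3i+1}\,t_{3i+2}\,t_{3(i+1)}$ and $C_c$ contributes the $B$-edge arc $c_{Bi}\,\dots\,c_{B(i+1)}$, sealing $t$ into a small cap near the north and $c$ into a small cap near the south of $R_i$.

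The key step is the crossing mechanism for the star paths. Since the meridian bundles have multiplicity $5Bk > k$, nothing may cross them, so each star, together with its path $t\,\ell_0^j\,s_j$ up to $t$ and its $a_j$ paths $s_j\,\ell_p^j\,c$ down to $c$, must be drawn entirely within its region. But $t$ and $c$ are sealed off by the cycle arcs, so every top path must cross the $C_t$ arc and every leg must cross the $C_c$ arc. Crossing a $2k$-multiedge head-on would put $2k$ crossings on the $k$-multiedge of the star, which is fatal. The device that saves us is the subdivision vertex: I would draw each $2k$-multiedge of the cycle as two parallel bundles of $k$ copies each, bounding a thin lens, and place the subdivision vertex ($\ell_0^j$ for a top, $\ell_p^j$ for a leg) inside that lens. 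Then the two $k$-multiedges meeting at the subdivision vertex each cross only one bundle of $k$ cycle copies, so every star copy receives exactly $k$ crossings, and every cycle copy in the lens receives exactly $k$ crossings.

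For the accounting to close, each cycle edge must be crossed by exactly one star path, and this is precisely where the $3$-partition hypothesis enters. Because $A_i$ has exactly three elements, region $R_i$ contains exactly three stars, matching the three edges of its $C_t$ arc; and because $\sum_{j\in A_i} a_j = B$, region $R_i$ contains exactly $B$ legs, matching the $B$ edges of its $C_c$ arc. I would therefore route the three top paths across the three $C_t$ edges bijectively, and the $B$ legs across the $B$ $C_c$ edges bijectively, each crossing realized by the lens construction above. A final tally then gives $lcr \le k$: meridian copies get $0$ crossings, each cycle copy gets exactly $k$, and each star copy gets exactly $k$.

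The main obstacle is verifying that all of this can be realized simultaneously without hidden extra crossings — in particular that, once $t$ and $c$ are sealed off, the remaining middle band of each region (containing $s_j$, the lower halves of the top paths, and the upper halves of the legs) can be drawn as a planar forest so that star edges do not cross one another or the ``wrong'' cycle edges. I expect the cleanest way to handle this is to order the three stars left-to-right across $R_i$ and send the legs of consecutive stars to consecutive blocks of $C_c$ edges, keeping the band planar, and then to confirm that the lens construction can be carried out independently at each of the $3 + B$ crossing sites in $R_i$ without the bundles interfering.
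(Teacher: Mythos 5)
Your proposal is correct and follows essentially the same construction as the paper: non-crossing meridian paths and cycles, stars placed per region according to the $3$-partition, and each subdivision vertex ($\ell_0^j$ or $\ell_p^j$) placed in the middle of the $2k$ copies of a cycle multi-edge so that each of its two incident $k$-multiedges crosses exactly $k$ cycle copies and vice versa. The paper treats the final ``no hidden crossings'' verification just as informally as you do, pointing to the explicit layout (Figure 1) rather than a formal planarity argument for the middle band.
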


\begin{proof}
Suppose that $A$ has a $3$-partition. We will explicitly describe a $k$-planar drawing $D$ of $G_A$. Place $t$ and $c$ at the north and south pole $\vec{n}$ and $\vec{s}$, respectively. Draw the $m$ multi-edge paths $P_i$ from $t$ to $c$ such that they do not cross. Draw the multi-edge cycles $C_t$ and $C_c$, again, in a non-crossing fashion. Each of the $m$ regions created by the paths $P_i$ corresponds to one of the $m$ multisets in the $3$-partition. In each of these regions, place the three star centers $s_i,s_j,s_k$ corresponding to the three elements of the corresponding multiset between $C_t$ and $C_c$. Place each of the vertices $\ell_0^i,\ell_0^j,\ell_0^k$, in the middle of the $2k$ copies of one of the three multi-edges of the path $t_{3\ell} \, t_{3 \ell +1} \, t_{3\ell + 2} \, t_{3\ell + 3}$. Because our partition of $A$ is a $3$-partition, there are a total of $B$ leaves connected to $s_i,s_j,s_k$. Place each leaf in the middle of the $2k$ copies of one of the $B$ multi-edges of the path $c_{B\ell} \, c_{B\ell+1} \, ... \, c_{B\ell + B}$. This is a $k$-planar drawing. An example of this layout is given in Figure 1.
\end{proof}

\begin{lemma} \label{lemma2}
	If $A$ does not have a $3$-partition, then $lcr(G_A)\ge 2k$.
\end{lemma}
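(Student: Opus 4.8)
The plan is to establish the contrapositive: assuming $G_A$ admits a drawing $D$ with $lcr(D)\le 2k-1$, I will recover a genuine $3$-partition of $A$. Everything is driven by the three multiplicity regimes — $k$ on star edges, $2k$ on cycle edges, $5Bk$ on spokes — against the budget $2k-1$, which makes each regime behave like a wall of a different strength. First I would fix the skeleton. A transversal crossing of two spoke bundles would force each of the $5Bk$ copies of one to meet each copy of the other, i.e.\ $(5Bk)^2$ crossings and hence some copy crossed at least $5Bk>2k-1$ times; so the spoke bundles cannot essentially cross, and choosing one representative copy per spoke yields $m$ pairwise non-crossing simple arcs $\hat P_1,\dots,\hat P_m$ from $t$ to $c$ cutting $S^2$ into $m$ regions $R_1,\dots,R_m$ as in Figure~\ref{fig:ex}. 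Since a single $k$-fold edge copy that passes from one side of a spoke bundle to the other must cross all $5Bk$ copies and thus incur more than $2k-1$ crossings, no star edge can leave its region; consequently each star (its center $s_j$, its leaves, and all incident edges) is confined to one region $R_{\sigma(j)}$.

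The technical heart is a costing lemma for crossing a cycle. Let $C_t$ bound the disk $D_t\ni t$, let $C_c$ bound the disk $D_c\ni c$, and let $\mathcal A$ be the annulus between them. If a $k$-fold cable crosses a $2k$-strand cycle bundle completely (from one side to the other), each of its strands meets all $2k$ cycle strands, giving $2k>2k-1$ crossings — forbidden. The only admissible way to get a two-edge path such as $t\,\ell_0^j\,s_j$ across such a barrier is to place its middle vertex strictly between two of the $2k$ cycle strands, splitting the passage into two sub-cables; a direct count then shows that every one of the $2k$ strands of that bundle receives exactly $k$ crossings from this single path. Two observations follow. First, because each star path ($t\,\ell_0^j\,s_j$ and each $s_j\,\ell_i^j\,c$) has only one subdivision vertex, it can thread at most one cycle bundle, so it may cross at most one of $C_t,C_c$; since the center path joins $t\in D_t$ to $s_j$ while every leaf path joins $s_j$ to $c\in D_c$, forcing each to cross only one cycle pins $s_j$ into the annulus $\mathcal A$. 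Second, since each threading path already loads every strand of its bundle with $k$ crossings, a second threading path would push some strand to $2k>2k-1$; hence at most one star path threads any given cycle bundle.

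It remains to count, and here the design of the anchors finishes the job. The spoke feet $t_{3i}$ split $C_t$ into $m$ arcs of exactly three edges, and the $c_{Bi}$ split $C_c$ into $m$ arcs of exactly $B$ edges; confinement places the center of each star in $R_i$ onto a $C_t$-edge of that arc and each of its leaves onto a $C_c$-edge of that arc, with at most one path per edge. Thus $R_i$ holds at most $3$ centers and at most $B$ leaves. Summing over the $m$ regions against the totals $3m$ centers and $\sum_j a_j=Bm$ leaves forces equality region by region, so every region contains exactly three stars whose leaf-counts sum to exactly $B$. The multisets $A_i=\{a_j:\sigma(j)=i\}$ are then a $3$-partition of $A$, contradicting the hypothesis and proving $lcr(G_A)\ge 2k$.

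I expect the main obstacle to be the topological bookkeeping for the high-multiplicity bundles rather than the counting: rigorously reducing the spokes to a non-crossing skeleton, justifying that changing regions truly costs a full bundle of $\ge 5Bk$ crossings (ruling out shortcuts through the bigons between consecutive copies or around the degree-two cycle vertices), and proving that crossing a $2k$-cycle-bundle completely forces $2k$ crossings per strand while a single threading forces exactly $k$ per strand. Each statement is visually obvious from Figure~\ref{fig:ex} but requires a careful argument about how a curve interacts with a nested family of parallel copies sharing two endpoints; once these facts are pinned down, the confinement, the annulus placement, and the final counting all follow mechanically.
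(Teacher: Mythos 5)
Your endgame is essentially correct and, if anything, more explicit than the paper's own finish: the observation that a two-edge path can pass a $2k$-strand cycle barrier only by parking its middle vertex between strands, that this loads every strand of the relevant multi-edge ``column'' with at least $k$ crossings (note: at least, not exactly -- curves may recross), and that consequently at most one star path can thread each multi-edge, is precisely the counting behind the paper's final paragraphs, as is your pigeonhole over the $3$ arc-edges of $C_t$ and $B$ arc-edges of $C_c$ per region.

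The genuine gap is exactly where you flag it, and it is not peripheral bookkeeping -- it is the bulk of the paper's proof. Your foundational claim, that a crossing between two spoke bundles forces all $(5Bk)^2$ pairwise crossings so the bundles ``cannot essentially cross,'' is false as stated: drawings here are not assumed good, so copies of the same path may cross and interlace with one another, and an individual copy of $P_i$ may cross copies of $P_j$, since each copy carries its own budget of $2k-1$ crossings. Bundles can therefore partially mix, and ``one side of a bundle'' is not even well defined until this is repaired. The paper repairs it in four steps: (i) a de-interlacing argument in $B(\vec{n},\delta)$ and $B(\vec{s},\delta)$ showing that after discarding at most $24(2k-1)$ copies of each $P_i$ the remaining copies do not interlace; (ii) a $5$-cycle argument showing the resulting cyclic orders at the two poles agree and are the \emph{natural} order $P_1,\dots,P_m$ -- a step your write-up never addresses, although your final count (``the feet split $C_t$ into $m$ arcs of exactly three edges, one arc per region'') silently depends on it, since with an ordering like $P_1,P_3,P_2,P_4$ the two feet on a region's boundary are not the endpoints of a $3$-edge arc of $C_t$; (iii) the same cleanup locally at each $t_{3i}$ and $c_{Bi}$; and (iv) the selection of ``middle'' copies of each $P_i$ to bound the regions $R_i$, which nothing can cross not because of a raw all-pairs count but because they are shielded on both sides by more than $6(2k-1)$ edge-disjoint copies, and any curve reaching them would exhaust its budget on the shield. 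Only after (i)--(iv) do the skeleton, the regions, and the annulus in which your costing lemma lives exist; since your proposal explicitly defers all of this, it is a correct plan whose hardest half is missing.
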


\begin{proof}
Suppose that $A$ does not have a $3$-partition, but there exists a drawing $D$ of $G_A$ with $lcr(G_A)< 2k$. Without loss of generality, we may assume that $t$ and $c$ are at the north and south pole, respectively. Let $\delta>0$ be such that there is exactly one vertex and no edge crossings in $B(\vec{n}, \delta )$ and $B(\vec{s}, \delta ) $, where $B(x,r):=\{y \in S^2 \, | \, \|x - y\|<r\}$. There are $m$ multi-edge paths emanating from $t$ and reaching $c$, which are non-crossing in $B(\vec{n}, \delta )$ and $B(\vec{s}, \delta ) $. We will first look at the clockwise ordering of the $5Bk$ copies of the paths $P_i$, $i = 1,...,m$, in $B(\vec{n}, \delta )$. 

It may be the case that copies of $P_i$ interlace with copies of other paths in $B(\vec{n}, \delta )$. For any fixed $i$, the copies of $P_i$ partition $B(\vec{n}, \delta )$ into $5Bk$ regions, each containing some number of copies of other paths $P_j$, $j \ne i$ (which we will refer to as non-$P_i$ paths), the sum of which equals $5Bk(m-1)$. No two copies of $P_i$ can partition these non-$P_i$ paths into two regions containing at least $2(2k-1) +1$ non-$P_i$ paths each, as this would mean that the cycle created by the two associated copies of $(t,t_{3i})$ intersects with $2(2k-1)+1$ edge disjoint cycles consisting of the union of two paths in opposing regions. The number of non-$P_i$ paths ($5Bk(m-1)$) is more than three times $2(2k-1) +1$, so one of the original $5Bk$ regions must contain at least $2(2k-1)+1$ non-$P_i$ paths, and, therefore, all but at most $2(2k-1)$ non-$P_i$ paths. Let us denote this region by $R^*$.

 Each of the $5Bk-1$ regions which are at least $6(2k-1) +1$ regions away from $R^*$ cannot contain any non-$P_i$ paths, as such a non-$P_i$ path, combined with a non-$P_i$ path in $R^*$, constitutes a cycle of length six that intersects with at least $6(2k-1) +1$ edge-disjoint cycles created by two associated copies of $(t,t_{3i})$.
 
 By removing at most $24(2k-1)$ copies of each $P_i$, $i = 1,...,m$, we now have a drawing in which no copies of paths $P_i$, $P_j$ interlace in $B(\vec{n}, \delta )$ or $B(\vec{s}, \delta ) $, with at least $5Bk - 24(2k-1)>(5B-48)k$ copies of each $P_i$ remaining.

This creates a local clockwise ordering of the paths both at $B(\vec{n}, \delta )$ and $B(\vec{s}, \delta ) $. If these two orderings are not the same, then two multi-edge paths must cross, a contradiction, as each path is three multi-edges long, and $(5B-48)k > 3(2k-1)$. Next, we observe that the ordering in $B(\vec{n}, \delta ) $ and $B(\vec{s}, \delta ) $ must be the natural ordering (or the reversal of it), namely, the paths must be ordered $P_1,....,P_m$. Suppose to the contrary, that the ordering is such that there exists an $i$ such that $P_i$ and $P_{i+1}$ are not adjacent. Then there is a multi-edge cycle of length five consisting of $t \, t_{3i} \, t_{3i+1} \, t_{3i+2} \, t_{3i+3}$, which $(5B-48)k>5(2k-1)$ edge-disjoint copies of some multi-edge path $P_j$ must cross, a contradiction. See Figure 2 for a visual example.

\begin{figure}\label{fig:local}
	\center
	\includegraphics[width=4.5in]{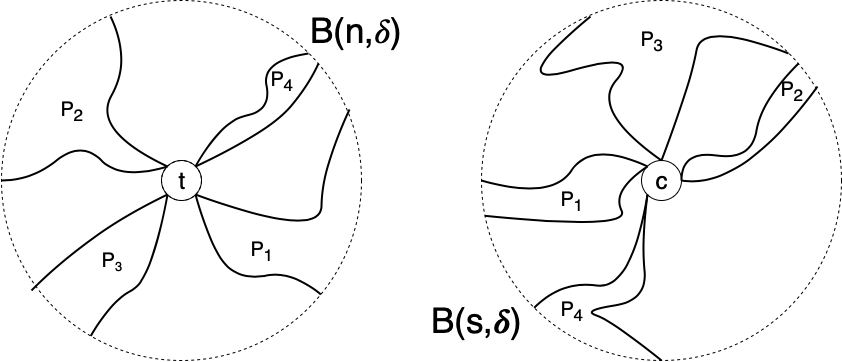} 
	\caption{A vertical view of $B(\vec{n},\delta)$ and $B(\vec{s},\delta)$. The clockwise ordering $P_1,P_3,P_2,P_4$ is the same for both $B(\vec{n},\delta)$ and $B(\vec{s},\delta)$. However, this cannot correspond to an embedding $D$ with $lcr(D)<2k$, as either $P_3$ or $P_4$ must cross the cycle $t \, t_{3} \, t_4 \, t_5 \, t_6$ to reach $c$.}
\end{figure}

Next, for every vertex $t_{3i}$ (and, similarly for $c_{Bi}$), we investigate the local structure of the at least $(5B-48)k$ copies of edges $(t,t_{3i})$ and $(t_{3i},c_{Bi})$. Let $\delta^*>0$ be such that the $\delta^*$ neighborhood of the location of $t_{3i}$ (denoted $N(t_{3i},\delta^*)$) only contains $t_{3i}$ and edges adjacent to $t_{3i}$, and no edge crossings. Using the same argument as above, we can remove some relatively small number of copies of $(t,t_{3i})$ and $(t_{3i},c_{Bi})$ and be left with edges which do not locally interlace.

The copies of $(t,t_{3i})$ partition $N(t_{3i},\delta^*)$ into $(5B-48)k$ regions, each of which contains some number of copies of $(t_{3i},c_{Bi})$. No two copies of $(t,t_{3i})$ can partition the copies of $(t_{3i},c_{Bi})$ into two regions containing at least $2(2k-1) +1$ copies of $(t_{3i},c_{Bi})$ each, as this would contradict $(2k-1)$-planarity. The number of copies of $(t_{3i},c_{Bi})$ ($\ge (5B-48)k$) is more than three times $2(2k-1) +1$, so one of the original $(5B-48)k$ regions must contain at least $2(2k-1)+1$ copies of $(t_{3i},c_{Bi})$, and, therefore, all but at most $2(2k-1)$ copies of $(t_{3i},c_{Bi})$. By removing at most $2(2k-1)$ copies of $(t,t_{3i})$ and $(t_{3i},c_{Bi})$ each, the local ordering in this neighborhood is such that copies of these two edges do not interlace. Therefore, by removing a total of at most $4(2k-1)$ copies of each path $P_i$, $i =1,...,k$, we now have edges which are locally well-ordered, and at least $(5B-48)k - 4(2k-1)> (5B-56)k$ copies of each $P_i$ remain.

We can now formally define each copy of the path, using the ordering of edges in $B(\vec{n},\delta)$ and labeling paths so that the ordering of copies of $(t,t_{3i})$ matches the ordering of $(t_{3i},c_{Bi})$ locally (and the same for $(t_{3i},c_{Bi})$ and $(c_{Bi},c)$ locally). See Figure 3 for an illustration.

\begin{figure}\label{fig:cross}
	\center
	\includegraphics[width=4.5in, height = .75 in]{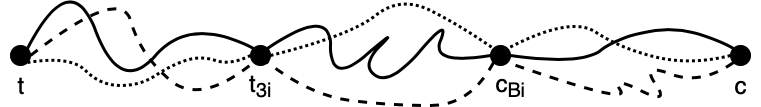} 
	\caption{Three copies of $P_1$, labeled so that the ordering agrees at each vertex.}
\end{figure}

We can now define a partition of $S^2$ based on the $m$ non-overlapping regions enclosed by the middle edge-disjoint copies of $P_i$ and $P_{i+1}$ (we define middle based on the ordering of $B(\vec{n},\delta)$). These middle edge-disjoint copies cannot cross any copy of any other path, as there are at least $(5B-56)k/2 -1 > 6(2k-1) -2$ edge-disjoint paths separating them. Let $R_i$ be the region defined by the middle copies of $P_i$ and $P_{i+1}$.

In addition, $R_i$ must fully contain a large number of copies of $P_i$ and $P_{i+1}$. In particular, the middle copy of $P_i$ cannot cross any copy of $P_i$ which is at least $6(2k-1)$ copies away in the initial ordering. Therefore, $R_i$ fully contains at least $(5B-56)k/2 -1 - 6(2k-1) >2Bk$ copies of $P_i$ and $P_{i+1}$ each. $R_i$ must fully contain the paths $t_{3i} \, t_{3i+1} \, t_{3i+2} \, t_{3i+3}$, otherwise there would be $2Bk >3(2k-1)$ edge-disjoint paths to cross. The same argument holds for $c_{Bi} \, c_{Bi+1} \, ... \, c_{Bi+B}$ by noting that $2Bk > B(2k-1)$.

We now consider the locations of the vertices $s_j$. Each vertex $s_j$ must be separated from $t$ by all multi-edge copies of $C_t$, otherwise there would be a $5$-cycle $t \, t_{3i} \, t_{3i+1} \, t_{3i+2} \, t_{3i+3}$ separating $a_j>Bk/4 > 5(2k-1)$ edge-disjoint paths from $s_j$ to $c$. Each region has at most three vertices $s_j$, otherwise one of the multi-edges in the path $t_{3i} \, t_{3i+1} \, t_{3i+2} \, t_{3i+3}$ would be crossed at least $2k$ times. Therefore each region has exactly three vertices $s_j$. 

The vertices $s_j$ are also separated from $c$ by all copies of $C_c$. Suppose this is not the case. If no copy of the multi-edge cycle $C_c$ separates $s_j$ from $c$, then there are $4k$ edge-disjoint multi-edge cycles separating $s_j$ from $t$, a contradiction. Then $s_j$ must be separated from $c$ by two copies of some edge $(c_i,c_{i+1})$, a contradiction to the $a_j>Bk/4 > 2(2k-1)$ edge-disjoint paths from $s_j$ to $c$. Therefore, $C_c$ separates $s_j$ from $c$.

Because $A$ does not have a $3$-partition, one of these regions, say $R_i$, must have three vertices $s_{j_1},s_{j_2},s_{j_3}$ such that the sum of their leaves exceeds $B$. However, there are only $B$ multi-edges in the path $c_{Bi} \, c_{Bi+1} \, ... \, c_{Bi+B}$, so one such multi-edge must have more than one path $s_j \, \ell^j_p \, c$ crossing it, a contradiction. The proof is complete.
\end{proof}

Although it is not necessary for the proof of Theorem \ref{multi}, one can also verify the stronger statement that $lcr(G_A) =k$ if and only if $A$ has a $3$-partition, otherwise $lcr(G_A)=2k$.

From here, we reduce from multigraph gap $2k$-planarity testing to gap $k$-planarity testing in a straightforward way. Given a multigraph $G = (V,E,\omega)$, we define the edge subdivision of $G$ to be the graph $G_* = (V_*,E_*)$ constructed by subdividing each edge of $G$ into two edges with a new vertex between them (i.e. replacing $e=(u,v) \in E$ by $(u,x_e),(x_e , v) \in E_*$, where $x_e \in V_*$ is a unique vertex for each copy of $e$). The key property of this edge subdivision is the following.

\begin{lemma} \label{lm:sub}
	Let $G_*$ be the edge subdivision of the multigraph $G$. Then 
	$$lcr(G_*) = \left\lceil \frac{lcr(G)}{2} \right\rceil.$$
\end{lemma}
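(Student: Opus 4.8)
The plan is to prove the two inequalities $lcr(G_*) \le \lceil lcr(G)/2\rceil$ and $lcr(G) \le 2\,lcr(G_*)$ separately; since the latter gives $\lceil lcr(G)/2\rceil \le lcr(G_*)$, the two together yield the claimed equality. Throughout write $\ell := lcr(G)$ and $\ell_* := lcr(G_*)$.

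For the upper bound I would begin with an optimal drawing $D$ of $G$, so that $lcr(D) = \ell$, and construct a drawing $D_*$ of $G_*$ by placing each subdivision vertex $x_e$ directly on the curve $\phi_e$. Since $D$ is normal and free of self-crossings, each $\phi_e$ carries at most $\ell$ crossings, occurring at finitely many parameter values $t_1 < \dots < t_{c_e}$ with $c_e \le \ell$. Choosing $x_e = \phi_e(s_e)$ for some $s_e$ strictly between $t_{\lfloor c_e/2\rfloor}$ and $t_{\lfloor c_e/2\rfloor+1}$ splits the crossings of $e$ into $\lfloor c_e/2\rfloor$ on the sub-curve $(u,x_e)$ and $\lceil c_e/2\rceil$ on $(x_e,v)$, each at most $\lceil \ell/2\rceil$. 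The two new edges inherit exactly the crossings lying on their respective halves, so every edge of $D_*$ has at most $\lceil \ell/2\rceil$ crossings, giving $\ell_* \le \lceil \ell/2\rceil$. The only care needed is to pick each $s_e$ so that $x_e$ avoids the finitely many crossing points, vertex locations, and previously placed subdivision vertices; normality guarantees that the open interval between consecutive crossings meets no other curve, so such a choice always exists.

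For the lower bound I would run this construction in reverse, starting from an optimal drawing $D_*$ of $G_*$ and \emph{merging}. For each edge $e = (u,v)$ of $G$, the vertex $x_e$ has degree two and is incident to no other edge, so I concatenate the curves of $(u,x_e)$ and $(x_e,v)$, smoothing the corner at $x_e$, into a single curve $\phi_e$ from $u$ to $v$. A crossing of the merged edge $e$ with another merged edge $e'$ corresponds to a crossing of one half of $e$ with one half of $e'$ in $D_*$, so the number of crossings the merged $e$ makes with all other edges is at most the sum of the crossing counts of its two halves, namely at most $\ell_* + \ell_* = 2\ell_*$. Crossings between the two halves of $e$ themselves become self-crossings, which, as the paper notes, may be removed without increasing the local crossing number. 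Hence $lcr(D) \le 2\ell_*$, and therefore $\ell \le 2\ell_*$.

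The genuinely delicate point is this merging step. I must confirm that passing the merged curve through the former location of $x_e$ introduces no new crossing, which holds because that point was a vertex location and hence met by no other edge, and that the self-crossing removals, carried out edge by edge, only ever delete portions of a curve and thus can only decrease crossing counts, so the bound of $2\ell_*$ on crossings with other edges is preserved. The arithmetic then closes the argument: $\ell \le 2\ell_*$ forces $\lceil \ell/2\rceil \le \ell_*$, which combined with $\ell_* \le \lceil \ell/2\rceil$ gives $lcr(G_*) = \lceil lcr(G)/2\rceil$.
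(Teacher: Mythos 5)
Your proposal is correct and follows essentially the same route as the paper: the upper bound by placing each subdivision vertex $x_e$ ``in the middle'' of the crossings along $e$, the lower bound by reversing the subdivision (merging at $x_e$), and the final equality by integrality of $lcr(G_*)$. Your write-up simply fills in details the paper leaves implicit, such as choosing $x_e$ off all other curves and removing the self-crossings created when the two halves of a subdivided edge cross each other.
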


\begin{proof}
	If $G_*$ is $k$-planar, then taking any $k$-planar drawing of $G_*$ and reversing the subdivision operation gives a drawing of $G$ which is clearly $2k$-planar. Conversely, given a $2k$-planar embedding of $G$, we can obtain from it a $k$-planar embedding of $G_*$ by placing the $x_e$ vertices ``in the middle" of the crossings on $e$ so that each segment has at most $k$ crossings. Therefore,
	$$ lcr(G) \le 2 lcr(G_*)  \quad \text{and} \quad    lcr(G_*) \le \bigg\lceil \frac{lcr(G)}{2} 
	\bigg\rceil,$$
 	which, by integrality of $lcr(G_*)$, implies that $lcr(G_*) = \bigg\lceil \frac{lcr(G)}{2}\bigg\rceil$.
\end{proof}

From here, our desired result immediately follows.

\begin{theorem}
	Gap $k$-planarity is NP-complete for all $k \ge 1$.
\end{theorem}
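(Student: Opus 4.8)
The plan is to combine the two main results already established in the excerpt, namely Theorem \ref{multi} (hardness of multigraph gap $k$-planarity) and Lemma \ref{lm:sub} (the behavior of local crossing number under edge subdivision), into a single polynomial-time reduction from multigraph gap $2k$-planarity to gap $k$-planarity. Since the excerpt already remarks that membership in NP is routine to verify, it suffices to establish NP-hardness, and I will focus entirely on the hardness reduction.

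First I would fix an arbitrary $k \ge 1$ and recall that Theorem \ref{multi}, applied with parameter $2k$ in place of $k$, tells us that multigraph gap $2k$-planarity is NP-complete; that is, it is NP-hard to distinguish multigraphs $G$ with $lcr(G) \le 2k$ from those with $lcr(G) \ge 4k$. Given such a multigraph $G = (V,E,\omega)$, I would form its edge subdivision $G_*$, which is an honest (simple) graph. Constructing $G_*$ is clearly polynomial in the size of $G$, since each of the (at most) $\sum_e \omega(e)$ edge-copies is replaced by a path of length two through a fresh vertex, so the reduction map is computable in polynomial time.

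The heart of the argument is to verify that $G_*$ is a valid \emph{gap} $k$-planarity instance and that the YES/NO answers are preserved. By Lemma \ref{lm:sub}, $lcr(G_*) = \lceil lcr(G)/2 \rceil$. If $lcr(G) \le 2k$, then $lcr(G_*) = \lceil lcr(G)/2\rceil \le k$, so $G_*$ is a YES instance. If instead $lcr(G) \ge 4k$, then $lcr(G_*) = \lceil lcr(G)/2\rceil \ge 2k$, so $G_*$ is a NO instance and in particular $lcr(G_*) \ge 2k$ certifies that the promise of gap $k$-planarity is met. Thus $G$ is $2k$-planar as a multigraph if and only if $G_*$ is $k$-planar as a graph, and the gap is preserved exactly, so a decision procedure for gap $k$-planarity would decide multigraph gap $2k$-planarity. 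This completes the reduction.

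The step requiring the most care is confirming that the promise structure transfers cleanly: one must check that the ``gap'' between $\le k$ and $\ge 2k$ in the target problem corresponds precisely to the gap between $\le 2k$ and $\ge 4k$ in the source problem, which is exactly why Theorem \ref{multi} is invoked at the doubled parameter $2k$. Beyond this bookkeeping, the proof is a direct composition of the two cited results, and I would expect no genuine obstacle — the subdivision lemma does all the real work, and the factor-of-two relationship it provides is precisely matched to the factor-of-two gap in both decision problems.
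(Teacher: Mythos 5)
Your proof is correct and follows exactly the paper's intended route: the paper itself states that the theorem follows by reducing multigraph gap $2k$-planarity (NP-hard by Theorem \ref{multi} applied at parameter $2k$) to gap $k$-planarity via the edge subdivision of Lemma \ref{lm:sub}, which is precisely your argument. You have simply spelled out the bookkeeping that the paper leaves implicit, and you do so correctly, including the observation that the ceiling in $lcr(G_*) = \lceil lcr(G)/2 \rceil$ maps the source gap $(\le 2k,\ \ge 4k)$ onto the target gap $(\le k,\ \ge 2k)$.
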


\begin{corollary}
	Deciding whether a graph is $k$-planar is NP-complete for all $k \ge 1$.
\end{corollary}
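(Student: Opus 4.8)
The plan is to compose the multigraph hardness result with the edge-subdivision lemma, as the surrounding text indicates. Since Theorem \ref{multi} holds for every integer parameter $\ge 1$, it applies in particular to the parameter $2k$: Multigraph gap $2k$-planarity is NP-complete (here $k \ge 1$ is fixed throughout). I would take this as the source problem and reduce it to Gap $k$-planarity via edge subdivision. Given an instance $G = (V,E,\omega)$ of Multigraph gap $2k$-planarity, so that either $lcr(G) \le 2k$ or $lcr(G) \ge 4k$, I would form its edge subdivision $G_*$ and apply Lemma \ref{lm:sub}, which gives $lcr(G_*) = \lceil lcr(G)/2 \rceil$.

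The case analysis is then immediate from monotonicity of the ceiling. If $lcr(G) \le 2k$ then $lcr(G_*) = \lceil lcr(G)/2 \rceil \le k$, while if $lcr(G) \ge 4k$ then $lcr(G_*) = \lceil lcr(G)/2 \rceil \ge 2k$. In both cases $G_*$ satisfies the promise of Gap $k$-planarity, and $lcr(G_*) \le k$ holds precisely when $lcr(G) \le 2k$. Consequently a Gap $k$-planarity oracle applied to $G_*$ decides the original multigraph instance, which establishes NP-hardness once the transformation is shown to be polynomial.

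To finish I would verify polynomiality and membership in NP. Edge subdivision introduces exactly one new vertex and one extra edge per copy of each edge of $G$, so $|V_*| = |V| + \sum_{e} \omega(e)$ and $|E_*| = 2\sum_{e} \omega(e)$; the transformation is thus polynomial \emph{provided} the total multiplicity $\sum_{e} \omega(e)$ is polynomially bounded in the input size. This is the one point worth flagging, and it is the reason the reduction is routed through $3$-partition: since $3$-partition is strongly NP-complete we may assume $B$ is polynomial in $m$, so the multiplicities appearing in the construction of Theorem \ref{multi} (all of size $O(Bk)$) are polynomially bounded, and hence so is $G_*$. Composing the polynomial reduction of Theorem \ref{multi} at parameter $2k$ with edge subdivision therefore yields a polynomial-time reduction from $3$-partition to Gap $k$-planarity. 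Membership in NP is routine: for fixed $k$, a $k$-planar drawing of $G_*$ is certified by its planarization, obtained by replacing each of the at most $k|E_*|/2$ crossings with a degree-four vertex, which is a planar graph of size polynomial in that of $G_*$ and whose validity is checkable in polynomial time. Together these give NP-completeness.
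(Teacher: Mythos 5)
Your proposal is correct and follows exactly the route the paper intends: apply Theorem \ref{multi} at parameter $2k$, subdivide edges, and use Lemma \ref{lm:sub} so that yes-instances map to graphs with $lcr \le k$ and no-instances to graphs with $lcr \ge 2k$, which a plain $k$-planarity decider (and a fortiori a gap oracle) must distinguish. The details you flag --- polynomial boundedness of the multiplicities via strong NP-completeness of $3$-partition, and NP membership via planarization --- are exactly the points the paper leaves implicit in ``our desired result immediately follows,'' so this is a faithful filling-in rather than a different proof.
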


\section{Crossing number vs local crossing number}

In this section, we consider the problem of approximately minimizing both the crossing number and local crossing number of a drawing. In particular, we define
$$r(G) := \min_{D}  \; \frac{cr(D)}{cr(G)}  \;
 \frac{lcr(D)}{lcr(G)} ,$$
 where the minimum is taken over all drawings $D$ of a non-planar graph $G$. If $r(G)$ is small, then there exists a drawing of $G$ which simultaneously approximately minimizes both the total number of crossings and the maximum number of crossings per edge. However, if $r(G)$ is large, then these two minimization problems are clearly incompatible. We prove the following result.

\begin{theorem} \label{thm:lcrcr}
	Let $\mathcal{G}^{\neg p}_n$ be the set of non-planar graphs of order $n$. Then 
	$$c n^{1/2} \le \max_{G \in \mathcal{G}^{\neg p}_n} r(G) \le C n $$
	for all $n \ge 5$, for fixed constants $c,C$.
\end{theorem}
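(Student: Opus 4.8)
The plan is to establish the two inequalities separately. The upper bound $r(G)\le Cn$ will come out as a clean corollary of the crossing lemma, whereas the lower bound $r(G)\ge cn^{1/2}$ requires exhibiting an explicit family of graphs on which no single drawing can approximately minimize $cr$ and $lcr$ at once.

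For the upper bound, fix a non-planar graph $G$ on $n\ge 5$ vertices, write $m=|E|$, and set $a=cr(G)\ge 1$ and $b=lcr(G)\ge 1$. I would bound $r(G)$ by testing the two extremal drawings. First, let $D_{lcr}$ be a drawing with $lcr(D_{lcr})=b$; summing crossings edge-by-edge gives $2\,cr(D_{lcr})=\sum_{e}(\text{crossings on }e)\le m\,b$, and since $lcr(D_{lcr})/lcr(G)=1$ this yields
\[
r(G)\le \frac{cr(D_{lcr})}{a}\le \frac{mb}{2a}.
\]
Second, let $D_{cr}$ be a crossing-minimal drawing; as such drawings are good, each edge crosses each other edge at most once, whence $lcr(D_{cr})\le m-1$ and, since $cr(D_{cr})/cr(G)=1$,
\[
r(G)\le \frac{lcr(D_{cr})}{b}\le \frac{m}{b}.
\]
Multiplying the two estimates eliminates $b$ and produces the single clean inequality $r(G)\le m/\sqrt{2\,cr(G)}$. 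A two-case split then finishes the proof: if $m\ge 4n$, the crossing lemma gives $cr(G)\ge m^3/(64n^2)$, so that $r(G)=O(n/\sqrt{m})=O(\sqrt n)$; and if $m<4n$, the trivial bound $cr(G)\ge 1$ gives $r(G)\le m/\sqrt 2=O(n)$. In either case $r(G)\le Cn$ for an absolute constant $C$, and it is worth noting that only the sparse regime is responsible for the linear bound.

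For the lower bound I would construct an explicit family $G_n$ exhibiting a genuine Pareto tension between the two objectives. The target behaviour is a \emph{sparse} family with $m=\Theta(n)$ (the crossing lemma forbids a small crossing number once $m\gg n$, so the extremal graphs must be sparse) for which $cr(G_n)\,lcr(G_n)=O(n^{1/2})$, while every drawing $D$ obeys the hyperbolic lower bound $cr(D)\cdot lcr(D)=\Omega(n)$. Since $r(G)=\bigl(\min_D cr(D)\,lcr(D)\bigr)/\bigl(cr(G)\,lcr(G)\bigr)$, this immediately gives $r(G_n)=\Omega(n^{1/2})$. Concretely, I would aim for a connected gadget admitting one drawing with $cr(D)=cr(G_n)=\Theta(n^{1/2})$ at the price of $lcr(D)=\Theta(n^{1/2})$, and a second drawing with $lcr(D)=lcr(G_n)=O(1)$ at the price of $cr(D)=\Theta(n)$, the essential point being that no intermediate drawing can beat the hyperbola. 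A natural mechanism is a nested family of separating structures engineered so that any attempt to lower the local crossing number forces the demand edges to be rerouted in a way that multiplies the total number of crossings. Finally, since such constructions exist only for an infinite sequence of orders $n_k=\Theta(n)$, one pads with isolated vertices and invokes $n_k=\Theta(n)$ to cover every $n\ge 5$.

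The hard part is the lower bound construction, and within it the verification that the tradeoff holds for \emph{every} drawing rather than only for the two per-objective optima. The generic global inequalities are not enough here: combining the crossing lemma with the edge bound $m=O(\sqrt{lcr(D)}\,n)$ for $k$-planar graphs only certifies $cr(D)\,lcr(D)=\Omega(m^5/n^4)$, which is exactly the quantity that also bounds $cr(G)\,lcr(G)$ from below, so these estimates alone can never produce a ratio larger than a constant. The additional factor of $n^{1/2}$ must instead be extracted from the topology of the gadget, by proving that any drawing whose crossing number is near-optimal is necessarily a factor $n^{1/2}$ away from being locally optimal. Engineering a graph whose two optima are provably incompatible by this margin, uniformly along the Pareto frontier, is the crux of the argument.
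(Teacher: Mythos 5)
Your upper bound argument is correct and complete, and it runs on the same fuel as the paper's: the per-edge count $2\,cr(D)\le |E|\cdot lcr(D)$, the fact that crossing-minimal drawings are good (so $lcr(D_{cr})\le |E|-1$), and the crossing lemma. Your way of combining them — multiplying $r(G)\le |E|\,lcr(G)/(2\,cr(G))$ and $r(G)\le |E|/lcr(G)$ to eliminate $lcr(G)$ and get $r(G)\le |E|/\sqrt{2\,cr(G)}$, then splitting on density — is a slightly cleaner packaging than the paper's (which takes the minimum of the bounds $|E|$ and $\tfrac{243|V|^2}{8|E|}$), and it has the nice feature of showing that only sparse graphs can be responsible for the linear upper bound. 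Either route yields $Cn$.

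The lower bound, however, is a genuine gap: you state the properties a construction ought to have and correctly diagnose why generic inequalities cannot give a super-constant ratio, but you never produce the gadget, and the gadget is the entire content of this half of the theorem. The paper's construction is a blown-up, subdivided $K_5$ on $\{u,v,w_1,w_2,w_3\}$: each edge $(u,w_i)$ and $(v,w_i)$ is replaced by $k^3$ parallel paths of length $k$, each edge $(w_i,w_j)$ by $k^4$ parallel paths of length $2$, and $(u,v)$ is kept as a single edge, so $n=\Theta(k^4)$. Two explicit drawings show $cr(G)\le k^4$ and $lcr(G)\le k^2$, hence $cr(G)\,lcr(G)\le k^6$. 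For an arbitrary drawing $D$ one argues a dichotomy: if at least $k^4-k^3$ edge-disjoint copies of the cycle $C_w$ through $w_1,w_2,w_3$ separate $u$ from $v$, the single edge $(u,v)$ crosses all of them, so $cr(D)\,lcr(D)\ge (k^4-k^3)^2$; otherwise at least $k^3$ copies of $C_w$, together with the $k^3$ copies each of the stars $S_u$ and $S_v$, form $k^9$ triples, each of which must contain a crossing by non-planarity of $K_5$, and since any single crossing serves at most $k^3$ triples this forces $cr(D)\ge k^6$ crossings among only about $12k^4$ edges, whence $lcr(D)\gtrsim k^2$ and again $cr(D)\,lcr(D)\gtrsim k^8$. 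Either way $r(G)\gtrsim k^2=\Theta(n^{1/2})$. Note also that your target parameters ($lcr(G_n)=O(1)$, $cr(G_n)=\Theta(n^{1/2})$, every drawing on the hyperbola $cr\cdot lcr=\Omega(n)$) differ from the paper's ($cr(G)\,lcr(G)\le n^{3/2}$ against a universal bound $\gtrsim n^2$); whether your version is realizable is not established by your sketch, and the mechanism the paper actually exploits — massive parallel redundancy so that the $K_5$ obstruction multiplies across triples of edge-disjoint copies — is exactly the kind of concrete device your "nested separating structures" sentence gestures at but does not supply. Without it, the inequality $cn^{1/2}\le \max_{G}r(G)$ remains unproven.
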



To prove the upper bound, we make use of the well-known crossing lemma.

\begin{theorem}[Crossing Lemma, \cite{ajtai1982crossing,leighton1984new}] \label{thm:crosslem}
	Let $G = (V,E)$ be a graph with $|E| \ge \lambda |V|$. Then
	$$cr(G) \ge (\lambda^{-2} - 3 \lambda^{-3}) \frac{|E|^3}{|V|^2}.$$
\end{theorem}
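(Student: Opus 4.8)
The plan is to prove the Crossing Lemma by combining a weak deterministic lower bound coming from Euler's formula with a probabilistic amplification (the random-deletion method). The starting point is the elementary fact that every graph $H$ satisfies
$$cr(H) \ge |E(H)| - 3|V(H)|.$$
First I would establish this: take any drawing of $H$ realizing $cr(H)$ crossings and delete one edge from each crossing pair; the result is a planar graph on $|V(H)|$ vertices, which has at most $3|V(H)| - 6$ edges. Hence $|E(H)| - cr(H) \le 3|V(H)| - 6 \le 3|V(H)|$, and rearranging gives the claim. For $|V(H)| < 3$ the inequality holds trivially, since its right-hand side is then negative while $cr(H) \ge 0$.

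Next comes the amplification. Fix an optimal drawing $D$ of $G$ with $cr(D) = cr(G)$; since drawings minimizing the crossing number are good, as recalled in the Definitions section, every crossing in $D$ occurs between two non-adjacent edges and therefore involves four distinct vertices. For a parameter $p \in (0,1]$ to be chosen later, form a random induced subgraph $G_p$ by retaining each vertex of $G$ independently with probability $p$, equipped with the drawing inherited from $D$. Writing $v$, $e$, $x$ for the numbers of retained vertices, surviving edges, and surviving crossings, linearity of expectation gives $\mathbb{E}[v] = p|V|$, $\mathbb{E}[e] = p^2|E|$ (an edge survives iff both endpoints do), and $\mathbb{E}[x] = p^4\,cr(G)$ (a crossing survives iff all four of its distinct endpoints do). Applying the weak bound to each realization — for which $x \ge cr(G_p) \ge e - 3v$ — and taking expectations yields
$$p^4\,cr(G) = \mathbb{E}[x] \ge \mathbb{E}[e] - 3\,\mathbb{E}[v] = p^2|E| - 3p|V|,$$
so that $cr(G) \ge p^{-2}|E| - 3p^{-3}|V|$.

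Finally I would optimize the choice of $p$. The hypothesis $|E| \ge \lambda|V|$ guarantees that $p := \lambda|V|/|E| \le 1$ is a legitimate probability; substituting this value collapses the two terms to
$$cr(G) \ge \frac{|E|^3}{\lambda^2|V|^2} - 3\,\frac{|E|^3}{\lambda^3|V|^2} = \left(\lambda^{-2} - 3\lambda^{-3}\right)\frac{|E|^3}{|V|^2},$$
which is exactly the asserted bound.

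The one point requiring genuine care is the identity $\mathbb{E}[x] = p^4\,cr(G)$: this demands that every crossing be governed by four independent vertex-survival events, which fails if a crossing occurs between adjacent edges (only three distinct vertices, survival probability $p^3$, breaking the scaling). This is precisely why one must work with an optimal — hence good — drawing, in which adjacent edges never cross. The only remaining subtlety is ensuring $p \le 1$, which is exactly the content of the hypothesis $|E| \ge \lambda|V|$ and accounts for the appearance of $\lambda$ in the final bound.
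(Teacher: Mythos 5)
The paper does not prove this statement at all---it is quoted as a known result with citations to Ajtai--Chv\'atal--Newborn--Szemer\'edi and Leighton---so there is no internal proof to compare against. Your argument is the standard probabilistic-deletion proof of the crossing lemma, and it is correct as written: the weak bound $cr(H) \ge |E(H)| - 3|V(H)|$ via edge deletion and Euler's formula, amplification by independent vertex retention with $\mathbb{E}[v] = p|V|$, $\mathbb{E}[e] = p^2|E|$, $\mathbb{E}[x] = p^4\,cr(G)$, and the substitution $p = \lambda|V|/|E|$ all check out, and the algebra reproduces the constant $\lambda^{-2} - 3\lambda^{-3}$ exactly (at the paper's choice $\lambda = 9/2$ this gives the $\tfrac{4}{243}$ used in Section 4, a useful sanity check). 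You were also right to flag the two genuine pressure points: goodness of an optimal drawing, which guarantees every crossing involves four distinct vertices so that each survives with probability exactly $p^4$ (and this is consistent with the paper's Definitions section, which notes that crossing-minimal drawings are good), and the hypothesis $|E| \ge \lambda|V|$, whose only role is to make $p \le 1$ a legitimate probability. One cosmetic remark: the parametrized form with general $\lambda$, rather than the more common fixed-constant version with $|E| \ge 4|V|$, is precisely what lets the optimization over $p$ go through verbatim, so your proof actually establishes the statement in the generality the paper needs.
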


Let us temporarily restrict ourselves to graphs satisfying $|E| \ge \tfrac{9}{2} |V|$. By Theorem \ref{thm:crosslem}, 
$$ \frac{4}{243} \frac{ |E|^3}{|V|^2} \le cr(D) \le \frac{|E|}{2} lcr(D).$$
In addition, any drawing $D$ satisfying $cr(D) = cr(G)$ must also satisfy $lcr(D)< |E|$, and therefore $r(G) < |E|$. If this is not the case, then there are two edges which cross each other more than once. Removing these crossings locally decreases the crossing number, a contradiction. This produces a natural upper bound of
$$r(G) < \frac{|E|}{\tfrac{2}{|E|}cr(D)} = \frac{|E|^2}{2 cr(D)} \le \frac{|E|^2}{2 \Big( \tfrac{4}{243} \tfrac{ |E|^3}{|V|^2}\Big)} = \frac{243|V|^2}{8|E|}.$$
Combining our two bounds of $|E|$ and $\tfrac{243|V|^2}{8|E|}$, we obtain the upper estimate of Theorem \ref{thm:lcrcr}.

%

To produce the lower bound, we give an infinite class of examples, based on adaptive edge subdivision of a multigraph version of $K_5$ (the smallest non-planar graph). In particular, let $G = (V,E)$, with
\begin{align*}
V &= \big\{u,v, w_1, w_2 , w_3 \big\} \bigcup \big\{x^i_{1,2},x^i_{2,3},x^i_{1,3}\big\}_{i=1}^{k^4} \bigcup \big\{x^{i,j}_{u,1},x^{i,j}_{u,2},x^{i,j}_{u,3}\big\}_{i=1,...,k^3}^{j = 1,...,k-1}\bigcup \big\{x^{i,j}_{v,1},x^{i,j}_{v,2},x^{i,j}_{v,3}\big\}_{i=1,...,k^3}^{j = 1,...,k-1}  , \\
E&=  \big\{ (u,x^{i,1}_{u,\ell}),(x^{i,1}_{u,\ell},x^{i,2}_{u,\ell}),...,(x^{i,k-1}_{u,\ell},w_\ell)  \big\}_{i = 1,...,k^3}^{\ell = 1,2,3} \bigcup \big\{ (v,x^{i,1}_{v,\ell}),(x^{i,1}_{v,\ell},x^{i,2}_{v,\ell}),...,(x^{i,k-1}_{v,\ell},w_\ell)  \big\}_{i = 1,...,k^3}^{\ell = 1,2,3} \\
&\qquad  \bigcup \big\{ (w_1,x^i_{1,2}), (x^i_{1,2}, w_2),(w_2,x^i_{2,3}), (x^i_{2,3}, w_3),(w_3,x^i_{1,3}), (x^i_{1,3}, w_1) \big\}_{i=1}^{k^4}  \bigcup \, (u,v), 
\end{align*}
for some natural number $k$. This graph can be thought of as a multigraph of $K_5$ on the vertices $u,v,w_1,w_2,w_3$, where edges $(u,w_1),(u,w_2),(u,w_3)$, $(v,w_1),(v,w_2),(v,w_3)$ have multiplicity $k^3$; $(w_1,w_2),(w_2,w_3),(w_3,w_1)$ have multiplicity $k^4$; and $(u,v)$ has only one edge. Each copy of the edge $(w_i,w_j)$ is replaced by a standard edge subdivision $(w_i,x^{\ell}_{i,j}),(x^{\ell}_{i,j},w_{j})$, $\ell = 1,...,k^4$, but each copy of the edges of the form $(u,w_i)$ (and $(v,w_i)$, resp.) is replaced by a path of length $k$ given by $(u,x^{\ell,1}_{u,i}),(x^{\ell,1}_{u,i},x^{\ell,2}_{u,i}),...,(x^{\ell,k-1}_{u,i},w_i)$, $\ell = 1,...,k^3$. To refer to the path subdivision of an edge $e$ in $K_5$, we will simply write $P_e$.

We first consider two different drawings of $G$. Let $D_1$ be the drawing in which the cycles $C_w := P_{(w_1 ,w_2)} P_{(w_2 ,w_3)} P_{(w_1 ,w_3)} $ separates $u$ and $v$ and the only edge crossings are the single edge $(u,v)$ crossing all copies of $P_{(w_1,w_3)}$. In this case we have $lcr(D_1)= cr(D_1) = k^4$. Alternatively, let $D_2$ be the drawing in which $u$ and $v$ are on the same side of every copy of the cycle $C_w$, and the only edge crossings are all the copies of $P_{(u,w_2)}$ crossing all the copies of $P_{(v,w_1)}$. In this case, due to the subdivision into paths of length $k$, we have $lcr(D_2) = k^2$ and $cr(D_2) = k^6$. See Figure 4 for a visual representation of these two drawings.

\begin{figure} \label{fig:crlcr}
	\begin{subfigure}{0.39 \textwidth}
	{\includegraphics[width = 2 in]{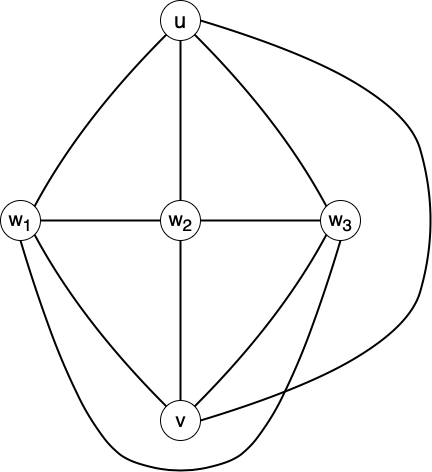}} 
	\caption{$D_1$}
	\end{subfigure} 	\begin{subfigure}{0.59\textwidth}
		{\includegraphics[width = 3 in]{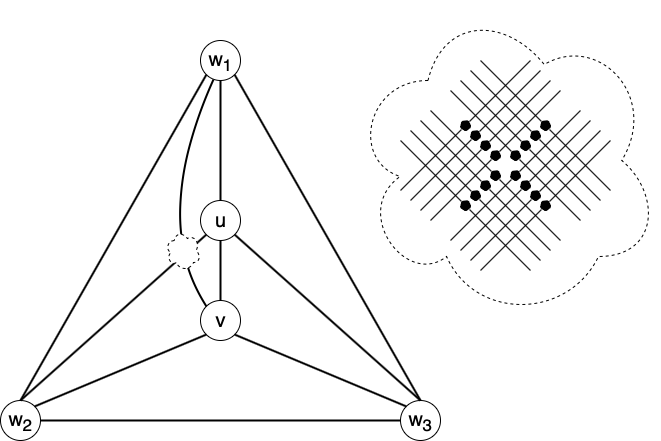}}
	\caption{$D_2$}
	\end{subfigure}
	\caption{A visual demonstration of the two drawings $D_1$ and $D_2$ of the graph $G$.}
	\label{examples}
\end{figure}

We will show that no drawing $D$ can produce a significantly better approximation to both crossing number and local crossing number than the two drawings described above. In particular, we will show that for all drawings $D$ of $G$, $lcr(D) \, cr(D) \ge \tilde c k^8$ for some $\tilde c$.

Suppose $D$ is such that at least $k^3$ edge disjoint copies of $C_w$ do not separate $u$ and $v$. If this is not the case, then $lcr(D) \, cr(D) \ge (k^4 - k^3)^2$.
Let $S_u$ be the ``star" created by $P_{(u,w_1)}$, $P_{(u,w_2)}$, and $P_{(u,w_3)}$ (with $S_v$ defined similarly). We have $k^3$ edge disjoint copies of $S_u$, $S_v$, and $C_w$, with $u$ and $v$ on the same side of every copy of $C_w$. Given one copy each of $S_u$, $S_v$, and $C_w$, with $u$ and $v$ on the same side of $C_w$, one of the three subgraphs must cross. This implies that there must be a total of at least $k^6$ crossings. As there are only $12k^4$ edges, the local crossing number is at least $k^2/12$.  Noting that $G$ has $6(k-1)k^3 + 3k^4 + 5$ vertices completes the proof of Theorem \ref{thm:lcrcr}.

\section{Concluding Remarks and Open Questions}

In this work, we have shown gap $k$-planarity testing is NP-complete for a gap of two, and quantified the ability to simultaneously approximately minimize the crossing and local crossing number of a drawing of a graph. However, a number of questions remain, especially in terms of approximation algorithms. For instance, \\

\begin{enumerate}[{Question} 1:]
\item What is the smallest $c_k \ge 2$ for which deciding whether a graph with local crossing number at most $k$ or at least $c_k k$ is NP-hard for a fixed $k\ge1$? \\

\item What is the smallest $c \ge 2$ for which there is a $c$-approximation algorithm for $lcr(G)$? \\

\item What is the asymptotic behavior of $\max_{G \in \mathcal{G}^{\neg p}_n} r(G)$ as a function of $n$?\\
\end{enumerate}
Answers to any of the above questions would mark a significant advancement in the understanding of graph drawings.

\section*{Acknowledgements}

The authors would like to thank Louisa Thomas for improving the style of the presentation, Erik Demaine for introducing us to hardness proofs of $1$-planarity, and Michel Goemans for interesting conversations on the subject. This research was initiated 
during an open problem session in MIT's Algorithmic Lower Bounds course. Research supported in part 
under ONR Research Contract N00014-17-1-2177.

{ \small 
	\bibliographystyle{plain}
	\bibliography{main}}

\end{document}